\documentclass[11pt]{article}

\usepackage[utf8]{inputenc}
\usepackage[T1]{fontenc}
\usepackage{mathpazo}

\RequirePackage[english]{babel}

\RequirePackage[a4paper,margin=2.5cm]{geometry}
\RequirePackage{parskip}

\newcommand{\affiliation}{\footnote}
\makeatletter
\def\@fnsymbol#1{\ensuremath{\ifcase#1\or *\or \dagger\or \ddagger\or \mathsection\or \|\or **\or \dagger\dagger \or \ddagger\ddagger \else\@ctrerr\fi}}
\makeatother

\usepackage{xcolor}
\definecolor{cblue}{RGB}{0,70,140}
\definecolor{cgreen}{RGB}{100,140,0}
\definecolor{cred}{RGB}{190,10,50}

\usepackage[shortlabels]{enumitem}
\setlist{itemsep=0ex,topsep=0ex,parsep=0.4ex}

\usepackage{amsmath,amsfonts,amssymb,amsthm,mathtools,thmtools,thm-restate,bbm,centernot}

\usepackage[hyphens]{url} % Line breaking of urls
\usepackage[hidelinks,colorlinks,pagebackref]{hyperref} % Coloring of hyperlinks and backreferences in bibliography; must be loaded after url
\hypersetup{citecolor=cgreen,linkcolor=cblue,urlcolor=cblue}
\usepackage[capitalise,nameinlink,noabbrev,compress]{cleveref} % References in the document; must be loaded after hyperref and amsmath

\renewcommand*{\backref}[1]{}
\renewcommand*{\backrefalt}[4]{
	\ifcase #1 Not cited.%
	\or $\uparrow$#2%
	\else $\uparrow$#2%
	\fi%
}
\newcommand{\comment}[1]{}

\theoremstyle{plain}
\newtheorem{theorem}{Theorem}[section]
\newtheorem{lemma}[theorem]{Lemma}

\newtheorem{proposition}[theorem]{Proposition}

\theoremstyle{definition}

\makeatletter
\renewenvironment{proof}[1][\proofname]
{\par\pushQED{\qed}
	\normalfont\topsep6\p@\@plus6\p@\relax\trivlist
	\item[\hskip\labelsep\bfseries#1\@addpunct{.}]
	\ignorespaces}
{\popQED\endtrivlist\@endpefalse}
\makeatother

\let\emptyset\varnothing

\newcommand{\ex}{{\rm ex}}

\title{On high-girth layered graphs of positive Tur\'an density in a hypercube}
\author{Maria Axenovich\affiliation{Institute of Algebra and Geometry, Karlsruhe Institute of Technology, Germany (\textsf{\href{mailto:maria.aksenovich@kit.edu}{maria.aksenovich@kit.edu}})} \and {Marko Pejic \affiliation{Institute of Algebra and Geometry, Karlsruhe Institute of Technology, Germany (\textsf{\href{mailto:marko.pejic@student.kit.edu}{marko.pejic@student.kit.edu}})}}}

\begin{document}
    \maketitle
      {
    \hypersetup{linkcolor=cred}
    \newcommand{\etalchar}[1]{$^{#1}$}

\begin{abstract}
For a graph $H$, let $\ex(Q_n, H)$ be the largest number of edges in a subgraph of the hypercube $Q_n$ of dimension $n$ that contains no  subgraph isomorphic to $H$. The Tur\'an density of $H$ in a hypercube, denoted $\pi_\square(H)$, is defined as  $\lim_{n\rightarrow \infty}  \ex(Q_n, H)/|E(Q_n)|$.
Determining $\pi_\square(H)$ remains a widely open question for general $H$.   Conlon found  a large class of graphs  with zero Tur\'an density in a hypercube. 
%This class was extended in \cite{A-general, Z}.
In this note, we address the case when $\pi_{\square} (H)>0$.  If a graph $H$ is not embeddable in an edge-layer of a hypercube, then  $\pi_{\square} (H)\geq 1/2$, as can be seen by taking every other edge layer of $Q_n$. Among the layered graphs, the only ones known to have positive Tur\'an density in a hypercube  are graphs containing cycles of length $6$ or $10$. We show that,
for every $g \geq 3$, there is a layered graph of girth at least $g$ whose Tur\'an density in a hypercube is at least $1/2$.
\end{abstract}

\section{Introduction}
A {\it hypercube} $Q_n$ with a ground set $X$ of size $n$ is a graph on a vertex set $\{A: A\subseteq X\}$ and an edge set consisting of all pairs $\{A,B\}$, where $A\subseteq B$ and $|A|=|B|-1$.    If a graph is a subgraph of $Q_n$, for some $n$, it is called {\it cubical}.  The $i$th {\it vertex layer} of $Q_n$ is the set of vertices of size $i$.  The $i$th {\it edge layer}  of $Q_n$ is a graph  induced by the union of the $i$th and $(i-1)$st vertex layers.  A graph is {\it layered} if it is a subgraph of some edge layer of a hypercube. 
	
For a graph $H$, let the {\it extremal number}  of $H$ in $Q_n$, denoted 	$\ex(Q_n, H)$,  be the largest number of edges in a subgraph $G$ of $Q_n$ such that there is no subgraph of $G$ isomorphic to $H$. 
The Tur\'an density of $H$ in a hypercube, also called the {\it hypercube Tur\'an density of $H$} and denoted $\pi_\square(H)$ is defined as  $\lim_{n\rightarrow \infty}  \ex(Q_n, H)/|E(Q_n)|$.

A graph $H$ is said to have a {\it zero Tur\'an density in a hypercube} if $\pi_\square(H)=0$, otherwise $H$ has a {\it positive Tur\'an density in a hypercube}. Here, when clear from context, we shall simply write Tur\'an density instead of Tur\'an density in the hypercube. Using a standard double counting argument,  the sequence $\ex(Q_n, H)/|E(Q_n)|$ is non-increasing, thus the above density notion is well-defined. 
The behaviour of the function $\ex(Q_n, H)$ is not well understood in general and there is no known  classification of  graphs with positive  Tur\'an density.

The question of which graphs $H$ have zero hypercube Tur\'an density was initiated by Chung \cite{Ch} and addressed by Conlon \cite{C}, who proved that a large class of so-called graphs with partite representation have this property. This result connected graphs of zero Tur\'an hypercube density to uniform hypergraphs of zero Tur\'an density.  Later, the first author \cite{A-general} and Zhu \cite{Z} found other classes of graphs of zero hypercube Tur\'an density. That permitted to determine  large specific  classes of  graphs, such as for example subdivisions,  with zero Tur\'an density,  see \cite{AMW}.  For more results on extremal numbers in a hypercube, see  \cite{baber, balogh, BHN, AKS, A-cycles, FO, FO2,  O}.

Currently, the only known cubical graphs of positive Tur\'an density are non-layered graphs, in particular ones containing a $4$-cycle, as well as layered graphs containing a $6$-cycle or a $10$-cycle, see Chung \cite{Ch}, Conder \cite{conder}, and Grebennikov and Marciano \cite{GM}, respectively.  One could recognise a graph being layered by providing a  special edge colouring, see Axenovich, Martin, and Winter \cite{AMW}.
In addition, a recent, AI-assisted result, see Axenovich and Sagdeev \cite{AS}, shows that as $t$ grows, the largest size of a layered graph on $t$ vertices is half of a largest size of a cubical graph on $t$ vertices, that is $(1/4)t\log t (1+o(1))$. Thus, any graph on $t$ vertices exceeding this size is not layered and thus has a positive hypercube Tur\'an density.

Are there very  sparse cubical graphs of positive hypercube Tur\'an density? 
A positive answer to this question is given by  Behague, Leader,  Morrison, and Williams \cite{BLMW}, who constructed cubical, non-layered graphs of arbitrarily high girth. 

Are there very sparse  layered graphs of positive hypercube Tur\'an density? 

In this note, we give a positive answer to this question by constructing  a layered graph  that has arbitrarily high girth  and not only has a positive Tur\'an density in the hypercube, but also does not have a Ramsey property. 

\begin{theorem}\label{main}
For any integer $g \geq 3$, there is a layered graph $H(g)$ of girth at least $g$ such that the following holds. For any integer $n$ there is a colouring of the edges of $Q_n$ in two colours such that there is no monochromatic copy of $H(g)$. In particular,  $\pi_{\square}(H(g))\geq 1/2$.
\end{theorem}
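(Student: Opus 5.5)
The plan is to fix one explicit $2$-colouring of $E(Q_n)$ that is ``locally alternating'' inside every edge layer. Then I build $H(g)$ as a layered graph of girth at least $g$ that cannot be embedded monochromatically under this colouring. The final claim $\pi_\square(H(g))\geq 1/2$ follows at once: one of the two colour classes has at least $|E(Q_n)|/2$ edges and contains no copy of $H(g)$.

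\textbf{The colouring.} Identify the ground set $X$ with $\{1,\dots,n\}$. Colour an edge $\{A,A\cup\{x\}\}$ with $x\notin A$ by the parity of $|\{a\in A: a<x\}|$, that is, by the parity of the rank of the added element $x$ inside $A\cup\{x\}$. The first thing to check is the behaviour of this colouring along a ``down--up'' step $A\cup\{x\}\to A\cup\{x\}\setminus\{y\}\to A\cup\{x,z\}\setminus\{y\}$ inside an edge layer. The rank parities of the two edges change in a controlled way, determined only by the relative order of $x,y,z$ and by how many elements of $A$ lie between them. From this I would extract a parity invariant: for a closed walk, or for a path whose endpoints are suitably related, staying in one edge layer, the sum of colours is determined by combinatorial data of the walk. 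In particular, certain configurations force both colours. (If this particular colouring turns out too weak, I would replace the rank parity by the rank modulo a larger number, or by a colouring via the parity of an inversion count. The rest of the plan is the same.)

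\textbf{The graph $H(g)$.} High girth means $H(g)$ has no short cycles to exploit, so the obstruction must come from a global structure rather than a local cycle. I would take a bipartite graph $B$ of girth at least $g$ with high minimum degree $d$, using the classical Erd\H{o}s construction or an explicit high-girth bipartite graph. I then realise $B$ as a layered graph, with one side of the bipartition in vertex layer $i-1$ and the other in vertex layer $i$. At every vertex, the $d$ incident edges add or remove distinct elements, and their colours are the rank parities of those elements relative to a common set. In a monochromatic copy, every vertex therefore has all its $d$ ``directions'' in one parity class of ranks. Propagating this along paths of $B$ and using the invariant from the first step, I aim to show that the parity constraints accumulated over a sufficiently large, high-degree, high-girth structure become inconsistent. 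Concretely, they would force some vertex set to have two ranks of the same element with different parities. To make the inconsistency unavoidable for every embedding, I would choose $B$ with a Ramsey-type property (for instance via a Ne\v{s}et\v{r}il--R\"odl style partite amalgamation preserving girth). Then every $2$-partition of the ``direction labels'' contains a substructure on which the parity bookkeeping closes up contradictorily.

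\textbf{Main obstacle.} The hard step is the middle one: turning the local parity rule into a genuine contradiction for \emph{every} embedding of a high-girth graph. An embedding can choose the ground elements and their order freely, and there are no short cycles to pin things down. I expect to handle this by encoding the embedding as a labelling of edges of $B$ by elements of $X$ together with an ordering. I would then apply a Ramsey/partite-construction argument on $B$ so that some bounded configuration with prescribed order type appears. On that configuration the colours are forced to differ, and I would verify this by a direct finite case check. Throughout, I would ensure the amalgamation keeps girth at least $g$ and keeps the graph layered, which is what makes the construction nontrivial.
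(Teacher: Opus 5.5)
\textbf{There is a genuine gap: the proposal never constructs $H(g)$ or shows it has no monochromatic copy.} Your colouring is exactly the paper's, $c(\{X,X\cup\{i\}\})=|X\cap[i-1]| \bmod 2$, and your final step (one colour class has half the edges) is fine. But the step you call the main obstacle is the whole theorem, and the route you sketch for it cannot work as described.

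Any sub-configuration of $B$ on fewer than $g$ vertices is a forest, and every forest has a monochromatic embedding under this colouring. Grow it greedily in the layer between sizes $k-1$ and $k$, with $k$ even. Step up from a $(k-1)$-set $P$ by adding a fresh element placed below or above everything used so far; this gives colour $0$ or $|P|\equiv 1$. Step down from a $k$-set $B$ by removing a not-yet-used element $b$ of $B$ with the required parity of $|B\cap[b-1]|$. So no ``direct finite case check'' on a bounded configuration can produce the contradiction. A witness must contain cycles, hence cycles of length at least $g$, so the obstruction has to come from a global property of the graph.

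Moreover, the colour of an edge depends on $|X\cap[i-1]|$, i.e.\ on the whole vertex set, not only on the relative order of direction labels. So a Ramsey argument on order types of labels does not control colours. The Ne\v{s}et\v{r}il--R\"odl amalgamation you invoke is also left without any statement of what it should deliver. Your fallback of ranks modulo a larger number would use more than two colours and so would not give the theorem.

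\textbf{The missing ideas are the following.} Take $H(g)=T_1(G)$, the $1$-subdivision of a $d$-regular graph $G$ of girth at least $g$ with $\lambda(G)<d/200$, obtained from random regular graphs. What matters is expansion of $G$, not high minimum degree of $H$; subdivision vertices have degree $2$.
\begin{enumerate}
\item \emph{Clustering step.} In any copy, poles of adjacent vertices differ in exactly two coordinates. Summing the expander-mixing lower bound over the cuts $V_i=\{v: i\in A_v\}$ gives $\sum_{u,v}|A_u\triangle A_v|<3N^2$. Hence some pole $w$ has more than $(N-4)/2$ poles at Hamming distance exactly $2$.
\item \emph{Local independent-set step.} After translating by $A_w$, these poles become $2$-sets, and every subdivision vertex between two of them is their union or their intersection. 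Injectivity makes the intersection-type edges a graph of maximum degree at most $2$. Monochromaticity forbids union-type edges inside the set $I_{\mathbf r}$ defined by $r_x=|A_w\cap[y-1]|+M+1$ and $r_y=|A_w\cap[x-1]|+M$, where $A_v\triangle A_w=\{x,y\}$ with $x<y$. Choosing $\mathbf r$ at random and then $3$-colouring gives $\alpha(G)\ge|L_w|/12>N/200$.
\end{enumerate}
This contradicts $\alpha(G)\le\lambda(G)N/d<N/200$. Your plan contains neither of these steps, so as it stands it is not a proof.
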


\section{Definitions and preliminary results}

Unless specified, the ground set $X$ of the hypercube $Q_n$ is  $X=[n]$, where $[n]= \{1, \ldots, n\}$. 
We let $[0]=\emptyset$. 
We shall use the basic properties of the symmetric difference $\triangle$ of sets: 
$(A=B\triangle C \Leftrightarrow B=A\triangle C)$ and $(A\triangle C)\triangle (B\triangle C)=A\triangle B$. This and further parity arguments can be seen by considering the binary indicator vector $\mathbb{1}$ for sets and  observing that $A=B\triangle C \Leftrightarrow \mathbb{1}_A = \mathbb{1}_B+\mathbb{1}_C $, addition modulo $2$.
For a graph $G$, we denote its set of vertices $V(G)$ and its set of edges $E(G)$. We use the notation $|G|=|V(G)|$ and $||G||=|E(G)|$. When clear from context, we denote an edge $\{u,v\}$ of $G$ by $uv$. For an edge $\{X, X\cup \{i\}\}$ of the hypercube, we say that the {\it direction} of the edge is $i$.

For a graph $G$ and $S,T \subseteq V(G)$, let $|E_G(S,T)|$ denote the number of edges with one endpoint in $S$ and the other in $T$, counting an edge twice if both of its endpoints lie in $S \cap T$.
Let $\alpha(G)$ be the size of a largest independent set in $G$. For a subset $V'$ of vertices of $G$, let $G[V']$ denote the subgraph of $G$ induced by $V'$.

For a graph $G=(V,E)$, let $T_1(G)$ be a $1$-subdivision of $G$, i.e., a graph with a vertex set $V(G) \cup E(G)$  and  an edge set $\{\{u,e\}, \{e,v\}:  e=uv\in E(G)\}$. 
We call the vertices from $V(G)$ in $T_1(G)$,  the {\it poles}  and other vertices, the {\it subdivision vertices}.

For a  graph $G$ on $n$ vertices, let $\lambda_1\geq \lambda_2\geq \cdots \geq \lambda_n$  be the eigenvalues of its adjacency matrix. Let $\lambda(G)=\max\{\lambda_2, |\lambda_n|\}$ be the second largest eigenvalue in absolute value. It is known that if $G$ is $d$-regular and  connected, then $d=\lambda_1>\lambda_2$.  

\begin{lemma}\label{expander}
(1) For any $d$-regular graph $G$ on $N$ vertices, $\alpha(G) \leq  \lambda(G)N/d$.
Moreover, for any $V' \subseteq V(G)$, $|E_G(V',V(G) - V')|~\geq~(d-\lambda(G))|V'|(N-|V'|)/N.$

(2) For any integers $d \geq 160399$ and $g\geq 3$ there is $N_0=N_0(d,g)$ such that for every integer $N \geq N_0$ with $dN$ even, there is a connected $N$-vertex $d$-regular graph $G$ of girth at least $g$ and  $\lambda(G) < d/200$. 
\end{lemma}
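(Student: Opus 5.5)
Part (1) consists of two standard spectral facts, so I would prove both directly from the spectral decomposition of the adjacency matrix $A$. For part (2) I would cite known constructions of high-girth regular expanders rather than build one.

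For the edge-boundary bound in (1), let $V'\subseteq V(G)$ with $|V'|=s$, and set $x=\mathbb{1}_{V'}-(s/N)\mathbb{1}$, which is orthogonal to the all-ones vector $\mathbb{1}$. Since $G$ is $d$-regular, $\mathbb{1}$ is an eigenvector with eigenvalue $d$. Then $x$ lies in the span of the remaining eigenvectors, so $|x^{T}Ax|\le \lambda(G)\|x\|^2$ with $\|x\|^2=s(N-s)/N$. I would then use the Laplacian identity
\[
\sum_{uv\in E(G)}(x_u-x_v)^2 = d\|x\|^2-x^{T}Ax .
\]
The left-hand side equals $|E_G(V',V(G)-V')|$, because the constant shift cancels in each difference. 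This gives $|E_G(V',V(G)-V')|\ge (d-\lambda(G))s(N-s)/N$.

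For the independence bound in (1), take an independent set $I$ of size $s$. Then $\mathbb{1}_I^{T}A\mathbb{1}_I=0$. Writing $\mathbb{1}_I=(s/N)\mathbb{1}+x$ gives
\[
0 = \frac{s^2}{N^2}\,dN + x^{T}Ax \ge \frac{ds^2}{N}-\lambda(G)\,\frac{s(N-s)}{N}.
\]
Hence $ds\le \lambda(G)(N-s)\le\lambda(G) N$, so $\alpha(G)\le \lambda(G)N/d$. This is Hoffman's ratio bound in its absolute-value form, and it could equally be cited.

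For part (2) I would invoke a known existence result rather than construct the graph. Friedman's theorem, or its high-girth variants, gives the following: for fixed $d$, random $d$-regular graphs on $N$ vertices almost surely satisfy $\lambda\le 2\sqrt{d-1}+\varepsilon$. Moreover, the number of cycles of length less than $g$ is asymptotically Poisson with bounded mean. One then either conditions on there being no short cycles (which has probability bounded away from zero, and Friedman's bound holds with probability $1-o(1)$), or deletes and repairs the few short cycles, which perturbs $\lambda$ only by $O(1)$. Connectivity follows from $\lambda_2<d$.

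The threshold on $d$ comes from the requirement $2\sqrt{d-1}+1<d/200$, i.e. roughly $d>160000$. The constant $160399$ is consistent with $2\sqrt{d-1}+\varepsilon< d/200$. Alternatively one could use explicit Ramanujan-type families, such as the LPS graphs, which have large girth. However, those exist only for special $d$ and $N$, so the random-graph route is needed to get every $N\ge N_0$ with $dN$ even.

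The main obstacle is making the high-girth conditioning rigorous while keeping the eigenvalue bound. I would handle it by noting that the event of having no short cycles has probability bounded below by a positive constant, namely $\exp(-\sum_{k<g}(d-1)^k/2k)$, in the configuration model. Since Friedman's event has probability $1-o(1)$, the two events intersect for large $N$. This yields $N_0(d,g)$, and I would simply cite these results.
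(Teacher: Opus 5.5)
Your proposal is correct and follows the paper's proof. In part (1) the paper cites the Alon--Chung form of the expander mixing lemma and rearranges it, once for a general $V'$ and once for an independent set $I$ using $|E_G(I,V(G)-I)|=d|I|$; you instead derive the same inequality from an eigenbasis containing $\mathbb{1}/\sqrt{N}$ and the Laplacian identity, which is just the standard proof of that lemma. Part (2) matches the paper: Friedman's $1-o(1)$ bound $\lambda(G)\le 2\sqrt{d-1}+1<d/200$, intersected with the positive-probability high-girth event, plus connectivity from $\lambda(G)<d$. The conditioning route you settle on is exactly the paper's, so the delete-and-repair alternative is not needed.
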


\begin{proof}
(1) A form of the Expander Mixing Lemma, see Alon and Chung,  \cite[Equation 2.2]{AC}, states that for any $S \subseteq V(G)$,
$\left||E_G(S,V(G)-S)| - (d|S|(N-|S|))/N\right| \leq (\lambda(G)|S|(N-|S|))/N.$
Rearranging gives the claimed lower bound.
Moreover, if $I$ is an independent set in $G$, then $|E_G(I,V(G)-I)| = d|I|$, and applying the Expander Mixing Lemma again with $S = I$ gives $d|I|^2/N \leq (\lambda(G)|I|(N-|I|))/N$, implying, $|I| \leq \lambda(G)N/d$ and thus $\alpha(G) \leq \lambda(G)N/d$. 
 
(2) For $d \geq 160399$, we have $2\sqrt{d-1} + 1 < d/200$.
For every sufficiently large integer $N$ with $dN$ even, let
$G_{N,d}$ be a uniformly random $d$-regular graph on $[N]$. 
By a result of McKay, Wormald, and Wysocka~\cite[Corollary~1]{MWW}, $\Pr( \text{girth}(G_{N,d})\geq g)$ tends to a positive constant, while Friedman's theorem~\cite[Corollary 1.4]{F} gives $\Pr(\lambda(G_{N,d}) \leq 2\sqrt{d-1} + 1)$ tends to $1$ as $N$ grows. 
Hence, for every sufficiently large such $N$, there is a $d$-regular graph $G$ on $N$ vertices of girth at least $g$ satisfying $\lambda(G) < d/200$. 
Moreover, $G$ is connected, for otherwise $d$ is a non-trivial eigenvalue and thus $\lambda(G) = d$, a contradiction. 
\end{proof}

\begin{lemma}\label{lemma: local_actual_pole}

Let $G$ be a graph. 
Consider a colouring $c$ of $E(Q_n)$ given by $c(\{X, X \cup\{i\}\})=|X \cap [i]| \pmod{2}$. 
Suppose that a copy of $T_1(G)$ is monochromatic, and denote its pole
corresponding to each $v \in V(G)$ by $A_v$. 
For $w \in V(G)$, let $L_w = \{v\in V(G): |A_v \triangle A_w| = 2\}$.
Then $\alpha(G[L_w]) \geq |L_w|/12$.

\end{lemma}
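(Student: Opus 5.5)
The plan is to show that $G[L_w]$ splits into at most $12$ classes, each of which is an independent set in $G$; the largest class then has size at least $|L_w|/12$. First I record what monochromatic means. For an edge $uv\in E(G)$, the subdivision vertex $S_{uv}$ is adjacent in $Q_n$ to both $A_u$ and $A_v$, so $|A_u\triangle A_v|=2$. Moreover, both edges $S_{uv}A_u$ and $S_{uv}A_v$ have the common colour $c_0$ of the copy. Concretely, if $A_u=Y\cup\{x\}$ and $A_v=Y\cup\{y\}$, then $S_{uv}\in\{Y,\,Y\cup\{x,y\}\}$. If $S_{uv}=Y$, the condition reads $|Y\cap[x]|\equiv|Y\cap[y]|\equiv c_0\pmod 2$. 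If $S_{uv}=Y\cup\{x,y\}$, the condition reads $|(Y\cup\{x\})\cap[y]|\equiv|(Y\cup\{y\})\cap[x]|\equiv c_0\pmod 2$. All of these are parity statements about prefix counts $|Z\cap[z]|$.

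Next I split $L_w$ into three types according to the shape of $A_v$ relative to $A_w$:
\begin{itemize}
\item up-vertices, with $A_v=A_w\cup\{a,b\}$;
\item down-vertices, with $A_v=A_w\setminus\{a,b\}$;
\item swap-vertices, with $A_v=(A_w\setminus\{a\})\cup\{b\}$.
\end{itemize}
Here $a\neq b$. Some type contains at least $|L_w|/3$ vertices, so it suffices to show that each type induces a $4$-colourable graph in $G$. Two vertices of the same type can be adjacent in $G$ only if their $2$-element difference sets share exactly one element, because their poles must be at distance $2$. For instance, two up-vertices have pairs $\{a,b\}$ and $\{a,c\}$, and their common part is $Y=A_w\cup\{a\}$.

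For each vertex I will attach a label in $\{0,1\}^2$ depending only on the parities $p(z)=|A_w\cap[z]|\bmod 2$ of the two special elements, together with their order. A natural first choice for an up-vertex with pair $a<b$ is the label $(p(a),p(b))$, suitably corrected by the indicators $[a<b]$ coming from whether the shared element lies below the differing ones. I then check that an edge of $G$ between two vertices of the same type forces their labels to differ. The computation goes as follows. For up-vertices with pairs $\{a,b\}$ and $\{a,c\}$ and $Y=A_w\cup\{a\}$, the two possibilities for $S$ translate, via the formulas above, into
\[
p(b)+[a<b]\equiv p(c)+[a<c]\equiv c_0 \quad\text{or}\quad p(b)+[a<b]+[c<b]\equiv p(c)+[a<c]+[b<c]\equiv c_0+\text{const}.
\]
Down-vertices and swap-vertices are handled by the analogous case analysis.

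The main obstacle is choosing the labelling so that adjacency genuinely forces distinct labels in every case: the two choices of $S$, the relative order of the three elements involved, and all three types. If a $4$-labelling does not separate every edge, my fallback is weaker. I would show that within each label class, each vertex has all its same-class neighbours constrained to a structure that is bipartite, for instance by the parity of the position of the shared element. That yields an independent set of at least half of each class. The constants would then need rebalancing, say $3$ types, then $2$ parity classes, then a factor $2$ from bipartiteness, which gives the same $12$.
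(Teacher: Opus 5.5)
\textbf{There is a genuine gap.} The step ``each type induces a $4$-colourable graph'' is false, so no labelling with a bounded number of labels, however corrected, can work.

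Take $A_w=\emptyset$, so $p\equiv 0$ and any label built from parities and order is constant. Let the poles be all pairs $\{x,y\}\subseteq[m]$, and join $\{x,y\}$ to $\{y,z\}$ for $x<y<z$ through the subdivision vertex $\{x,y,z\}$. The edge from $\{x,y\}$ to $\{x,y,z\}$ has colour $|\{x,y\}\cap[z]|\equiv 0$, and the edge from $\{y,z\}$ has colour $|\{y,z\}\cap[x]|=0$. Distinct edges use distinct $3$-sets. So this is a genuine monochromatic copy of $T_1(G)$, with $w$ isolated, in which every vertex of $L_w$ is an up-vertex. Here $G[L_w]$ is the shift graph, whose chromatic number $\lceil\log_2 m\rceil$ is unbounded.

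Your fallback fails on the same example. The pairs $\{1,2\},\{2,4\},\{4,5\},\{3,4\},\{2,3\}$ span a $5$-cycle there, all in one parity class. So a class need not contain an independent set of half its size.

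Your case analysis also never uses that the copy is injective, and it must. With $A_w=\emptyset$, all pairs $\{a,b\}$ with $b<a$ can be joined pairwise through the single vertex $\{a\}$ by colour-$0$ edges. Without injectivity, $G[L_w]$ could therefore be a clique.

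\textbf{The missing idea} is to replace a proper colouring by an averaging (fractional-colouring) argument, and to treat the two kinds of subdivision vertices separately. Write $f(v)=A_v\triangle A_w$. Then an edge $uv$ of $G[L_w]$ has subdivision vertex $A_w\triangle(f(u)\cap f(v))$ or $A_w\triangle(f(u)\cup f(v))$.

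For the first kind, injectivity allows only one edge through each set $A_w\triangle\{a\}$. So these edges form a graph of maximum degree at most $2$, which is $3$-colourable.

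For the second kind, the paper chooses $\mathbf r\in\{0,1\}^n$ uniformly at random. It keeps those $v$ with $f(v)=\{x,y\}$, $x<y$, such that $r_x=|A_w\cap[y-1]|+c_0+1$ and $r_y=|A_w\cap[x-1]|+c_0$ (mod $2$, with $c_0$ the colour of the copy). The colour conditions you derived show that two adjacent kept vertices would require different values of $r$ at their common coordinate. Each vertex is kept with probability $1/4$. In the example above, this keeps exactly the pairs with $r_x=1$, $r_y=0$.

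Taking the best colour class of the $3$-colouring inside the best $I_{\mathbf r}$ gives $|L_w|/12$, with no splitting into types needed.
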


\begin{proof}

Consider a monochromatic copy $H$ of $T_1(G)$ in $Q_n$ in colour $M\in \{0,1\}$.
Let the vertex in $H$ corresponding to $v \in V(G)$ be  denoted $A_v$ and the vertex of $H$ corresponding to the subdivision vertex of $uv\in E(G)$ be denoted by $A_{uv}$, for each such $v$ and $uv$.
%If $L = \emptyset$, the claim is immediate. 
%We may therefore assume $|L| \geq 1$. 
Our goal is to partition the edge set of $G[L_w]$ into two parts and find a large vertex set containing no edge from either part, thus yielding a large independent set.

Fix $w \in V(G)$ and let $L = L_w$. 
To this end, for each $v \in V(G)$ and each $uv \in E(G)$, let
\[
f(v) = A_v \triangle A_w ~~~ \text{and}~~~ f(uv) = A_{uv} \triangle A_w.
\]
One can think of the map $f$ as a shift of each vertex in the embedding of $T_1(G)$ by the image of the embedding of $w$, so that $w$ is mapped into $\emptyset$. This way, all vertices of $L$, i.e., those at distance $2$ from the image of $w$ correspond to ones in the second vertex layer. Any two vertices in the second vertex layer that have a common neighbour, have a common neighbour in the first and in the third layers. This will define our partition. 

Formally, let $u,v \in L$ with $uv \in E(G)$. 
Since $u,v\in L$, we have $|f(u)| = |f(v)| = 2$. 
Moreover, since $A_u$ and $A_v$ are distinct neighbours of $A_{uv}$ in $Q_n$, we have $|A_u\triangle A_v|=2$. Hence $|f(u) \triangle f(v)| = |(A_u \triangle A_w) \triangle (A_v \triangle A_w)| = |A_u \triangle A_v| = 2$.
It follows that $|f(u) \cap f(v)| = 1$, 
$|f(u)\cup f(v)|=3$, and the two common neighbours of $f(u)$ and $f(v)$ in the hypercube are $f(u) \cap f(v)$ and $f(u) \cup f(v)$. 
Hence $|f(uv)| \in \{1,3\}$ and $f(uv) = f(u) \cap f(v)$ or $f(uv) = f(u)\cup f(v)$.

Let $J_{\cap}$ and $J_{\cup}$ be the spanning subgraphs of $G[L]$
whose edge sets consist of the edges $uv$ satisfying
$f(uv) = f(u)\cap f(v)$ and $f(uv) = f(u)\cup f(v)$, respectively.
We shall first find a large independent set in $J_{\cup}$ and then find its subset that is an independent set in $J_{\cap}$. 
Since $E(G[L]) = E(J_{\cap}) \cup E(J_{\cup})$, the resulting set is independent in $G[L]$.

If $f(u) = \{a,b\}$, then for  every edge $uv$ of $J_{\cap}$,  $f(uv) \in \{\{a\}, \{b\}\}$,  $A_{uv}=A_w \triangle \{a\}$ or $A_{uv}=A_w \triangle \{b\}$. Since the embedding is injective, at most one such edge can satisfy
$f(uv)=\{a\}$, and at most one can satisfy $f(uv)=\{b\}$. Thus $d_{J_{\cap}}(u)\leq 2$. Hence the maximum degree of $J_{\cap}$ is at most $2$, and $J_{\cap}$ has a proper $3$-vertex-colouring. We shall use this colouring on an independent set of $J_{\cup}$.

Next, we shall find a large independent set in $J_{\cup}$. 
To prove this, all the following computations are done modulo $2$. 
Note that, since $i \notin X$ for every edge $\{X, X \cup \{i\}\}$, we have
$X \cap [i] = X \cap [i-1]$. 
Thus, equivalently, $c(\{X, X \cup\{i\}\}) = |X \cap [i-1]|$.
Fix an arbitrary vector $\mathbf r = (r_1, \ldots, r_n) \in \{0,1\}^n$,
and let $I_{\mathbf r}$ consist of all $v \in L$ with
$f(v) = \{x,y\}$, where $x < y$, that satisfy
\[
r_x = |A_w \cap [y-1]| + M + 1
\qquad\text{and}\qquad
r_y = |A_w \cap [x-1]| + M.
\]
We claim that $I_{\mathbf r}$ is an independent set in $J_{\cup}$
for every $\mathbf r$.
Suppose otherwise, and let $uv \in E(J_{\cup})$ with
$u,v \in I_{\mathbf r}$.
Let $f(u) = \{a,b\}$ and $f(v) = \{a,b'\}$ where $b < b'$.
Then $A_{uv} = A_w \triangle \{a,b,b'\}$. 
Note that, for any $T \subseteq [n]$ and $i \in [n]$, the edge between
$A_w \triangle T$ and $A_w \triangle T \triangle \{i\}$ has colour $|A_w \cap [i-1]| + |T \cap [i-1]|$.
We apply this with $T=\{a,b,b'\}$, first with $i=b'$ and then with
$i=b$, to the following two edges of $Q_n$: $\{A_u,A_{uv}\} = \{A_w\triangle\{a,b\}, A_w\triangle\{a,b,b'\}\}$ and $\{A_v,A_{uv}\} = \{A_w\triangle\{a,b'\}, A_w\triangle\{a,b,b'\}\}$, respectively.
Since these two edges have the same colour, we have
\begin{equation}\label{eq}
|A_w\cap[b'-1]|+|\{a,b,b'\}\cap[b'-1]|
=
|A_w\cap[b-1]|+|\{a,b,b'\}\cap[b-1]|.
\end{equation}

Next, we shall use the definition of $I_{\mathbf r}$ applied to
$u,v\in I_{\mathbf r}$ and the following cases.

\medskip
\noindent
{\bf Case 1.} $a<b<b'$ or $b<b'<a$.

For some $\varepsilon\in\{0,1\}$, we have $r_a=|A_w\cap[b-1]|+M+\varepsilon$ and $r_a=|A_w\cap[b'-1]|+M+\varepsilon$, thus $|A_w\cap[b-1]|=|A_w\cap[b'-1]|$.
On the other hand, $|\{a,b,b'\}\cap[b'-1]| \neq|\{a,b,b'\}\cap[b-1]|$.

\medskip
\noindent
{\bf Case 2.} $b<a<b'$.

We have $r_a=|A_w\cap[b-1]|+M$ and $r_a=|A_w\cap[b'-1]|+M+1$, thus $|A_w\cap[b-1]|=|A_w\cap[b'-1]|+1$. 
On the other hand, $|\{a,b,b'\}\cap[b'-1]| =|\{a,b,b'\}\cap[b-1]|$.

In both cases we have a contradiction to~\eqref{eq}.

Thus $I_{\mathbf r}$ is independent in $J_{\cup}$ for every choice of
$\mathbf r$. 
We shall choose $\mathbf{r}$ randomly to ensure that $I_{\mathbf r}$ is large. 
More specifically, we choose the bits $r_i$ independently and uniformly at random. 
For every fixed $v \in L$, the two equalities for $r_x$ and $r_y$ determining $I_{\mathbf r}$ fix two distinct random bits, hence, $\Pr(v \in I_{\mathbf r}) = 1/4$. 
Thus $\mathbb E[|I_{\mathbf r}|] = |L|/4$, so some choice of $\mathbf r$ satisfies $|I_{\mathbf r}| \geq |L|/4$. 

Recall that $J_{\cap}$ has a proper $3$-vertex-colouring. 
This partitions  $I_{\mathbf r}$ into at most three colour classes. 
By the pigeonhole principle, one of these classes contains at least $|I_{\mathbf r}|/3 \geq |L|/12$ vertices of $I_{\mathbf r}$. 
This set is independent in $J_{\cap}$, since all of its vertices have the same colour, and it is independent in $J_{\cup}$, since it is contained in $I_{\mathbf r}$. 
Since every edge of $G[L]$ belongs to either $J_{\cap}$ or $J_{\cup}$, this set is also independent in $G[L]$.
Thus $\alpha(G[L]) \geq |L|/12$. 
\end{proof}

\begin{proposition}\label{proposition: 2-colors}
Let $G$ be a connected $d$-regular graph on $N$ vertices, where
$d>200$, and suppose that $\lambda(G)<d/200$. Then for any $n$  there is an edge-colouring of $Q_n$ in two colours containing no monochromatic subgraph isomorphic to $T_1(G)$.
\end{proposition}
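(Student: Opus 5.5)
We use the colouring $c$ of Lemma~\ref{lemma: local_actual_pole}, $c(\{X,X\cup\{i\}\})=|X\cap[i]| \pmod 2$, and argue by contradiction. Suppose $H$ is a monochromatic copy of $T_1(G)$ in $Q_n$, with poles $A_v$ for $v\in V(G)$. Every subdivision vertex is adjacent to two poles, so all poles lie in vertex layers of the same parity. Hence for adjacent $u,v\in V(G)$ we have $|A_u\triangle A_v|=2$. For each $w$, the set $L_w$ of Lemma~\ref{lemma: local_actual_pole} therefore contains $N_G(w)$, and that lemma gives $\alpha(G[L_w])\ge |L_w|/12$. On the other hand, Lemma~\ref{expander}(1) gives $\alpha(G)\le \lambda(G)N/d<N/200$, so $G$ has no large independent sets at all. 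The task is to make these two facts collide, which forces $L_w$ to be a large fraction of $V(G)$ for some $w$.

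First I treat the case where some $w$ has $|L_w|\ge 12N/200$. Then
\[
\alpha(G)\ge\alpha(G[L_w])\ge |L_w|/12\ge N/200>\lambda(G)N/d,
\]
which contradicts Lemma~\ref{expander}(1). So we may assume $|L_w|<6N/100$ for every $w$.

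The main obstacle is to rule out the case where every $L_w$ is small, and I would use the edge-expansion part of Lemma~\ref{expander}(1). The idea is that each $L_w$ must then be locally too "independent" relative to its size. Since $G$ is $d$-regular, every $w$ has its $d$ neighbours in $L_w$. The hypercube structure also restricts $G[N(w)]$: poles of neighbours of $w$ are $A_w\triangle\{x,y\}$, and two of them adjacent in $G$ must share an element. I would count, over all $w$, pairs $(w,v)$ with $v\in L_w$. This relation is symmetric, so it defines an auxiliary graph $R$ on $V(G)$ containing $G$ and with maximum degree less than $6N/100$.

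Next I would partition $V(G)$ according to the parity pattern of the poles. Pick a random vector $\mathbf r$ and let $S$ be the set of vertices $v$ whose pole, relative to a fixed reference, satisfies a parity condition as in the definition of $I_{\mathbf r}$. Then estimate $|E_G(S,V(G)-S)|$ in two ways. From below, Lemma~\ref{expander}(1) gives $|E_G(S,V(G)-S)|\ge (d-\lambda)|S|(N-|S|)/N\approx (199/200)\,d\,|S|(N-|S|)/N$. From above, the local structure of Lemma~\ref{lemma: local_actual_pole} applied around each $w$ should bound the number of crossing edges at $w$ by about a constant fraction of $d$. The constants $1/12$ and $1/200$ suggest the intended margin. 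I am least sure about this upper bound. If it fails, the fallback is to show directly that the first case must occur: using connectivity and girth, exhibit some $w$ with $|L_w|$ linear in $N$, for instance by showing that poles within distance $2$ in $G$ stay within Hamming distance $2$ under the monochromatic constraint.
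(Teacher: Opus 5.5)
Your first case is correct, but it is the easy half. The proposal has a genuine gap in the remaining case, which is where all the difficulty lies. You need some $w$ with $|L_w|$ linear in $N$, and none of your suggestions produces one.

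Arguments local to a single vertex cannot work. The proposition must cover graphs with $N$ arbitrarily large compared to $d$ (as in its application), while balls of bounded radius in $G$ have size bounded in terms of $d$. The proposition also assumes nothing about girth. Your fallback claim is not even true locally: for a path $uvx$ in $G$ one only gets $|A_u\triangle A_x|\in\{2,4\}$, and $4$ does occur. For example, the path $\{1,2\},\{2\},\emptyset,\{4\},\{3,4\}$ in $Q_4$ is monochromatic of colour $0$. As you note yourself, your random parity cut $S$ comes with no upper bound on $|E_G(S,V(G)-S)|$, and the auxiliary graph $R$ is never used.

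The missing idea is a global averaging argument over \emph{coordinate} cuts. For $i\in[n]$ let $V_i=\{v\in V(G): i\in A_v\}$. Every edge $uv$ of $G$ has $|A_u\triangle A_v|=2$, so it crosses exactly two of these cuts. Hence $\sum_{i=1}^n|E_G(V_i,V(G)-V_i)|=dN$ exactly. This is the upper bound you were looking for, but it holds for the sum over all directions, not cut by cut. Combine it with the edge-expansion bound of Lemma~\ref{expander}(1) applied to every $V_i$, and double count:
\[
\sum_{u,v\in V(G)}|A_u\triangle A_v| = 2\sum_{i=1}^n|V_i|(N-|V_i|)\le \frac{2d}{d-\lambda(G)}N^2<3N^2 .
\]
So some $w$ satisfies $\sum_{v}|A_v\triangle A_w|<3N$. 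All these distances are even by connectivity and nonzero for $v\neq w$ by injectivity. Therefore $3N>2|L_w|+4(N-1-|L_w|)$, i.e.\ $|L_w|>(N-4)/2$. Lemma~\ref{lemma: local_actual_pole} then gives $\alpha(G)>(N-4)/24>N/200$, contradicting $\alpha(G)\le\lambda(G)N/d<N/200$. In particular, no case split is needed: expansion by itself forces some $L_w$ to contain about half of $V(G)$.
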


\begin{proof}

For every $n$, colour $E(Q_n)$ by $c(\{X,X\cup\{i\}\}) = |X \cap [i]|\pmod 2$. 
Suppose that this colouring contains a monochromatic copy of $T_1(G)$, and denote its poles by $A_v$, $v \in V(G)$. 
For each $i \in [n]$, let $V_i = \{v:i \in A_v\}$. 
Since the poles corresponding to every edge of $G$ differ in exactly two directions, we have that $\sum_{i=1}^n|E_G(V_i,V(G)-V_i)| = 2\|G\|=dN$. 
Applying  Lemma \ref{expander}  to each $V_i$ and double counting ordered pairs
of poles gives
\[
\sum_{u,v \in V(G)}|A_u \triangle A_v| = 2\sum_{i=1}^n |V_i|(N-|V_i|)
 \leq \frac{2d}{d-\lambda(G)}N^2 < 3N^2.
\]
Thus there is some $w \in V(G)$ such that $\sum_{v \in V(G)}|A_v \triangle A_w| < 3N$. 
Let $L_w = \{v \in V(G): |A_v \triangle A_w| = 2\}$. 
Since $G$ is connected, all distances $|A_v \triangle A_w|$ are even,
and $A_v = A_w$ only for $v = w$. 
Hence, $3N > 2|L_w| + 4(N-1 - |L_w|)$, and thus $|L_w| > (N-4)/2$.
Lemma~\ref{lemma: local_actual_pole} now gives $\alpha(G) \geq \alpha(G[L_w]) \geq |L_w|/12 > (N-4)/24 > N/200$, since
$N \geq d + 1 > 200$. 
On the other hand, since $\lambda(G) < d/200$, Lemma~\ref{expander} gives
$\alpha(G) < N/200$, a contradiction.
Thus the colouring contains no monochromatic copy of $T_1(G)$.
\end{proof}

\begin{proof}[Proof of Theorem~\ref{main}]

Fix $g \geq 3$ and  $d \geq 160399$ and $N_0=N_0(d, g)$ as in Lemma~\ref{expander}. Let $N>N_0$ such that $dN$ even, and let $G$ be a connected $N$-vertex $d$-regular graph of girth at least $g$ satisfying $\lambda(G) < d/200$ guaranteed by Lemma~\ref{expander}.
Let $H(g) = T_1(G)$. 
Note that the graph $H(g)$ is layered. 
Indeed, we can embed the poles of $H(g)$ in the first vertex layer and the subdivision vertices in the second vertex layer injectively, such
that the image of each subdivision vertex is adjacent to the images of the endpoints of the respective edge. 
Moreover, $\operatorname{girth}(H(g)) = 2\cdot \operatorname{girth}(G)\geq g$.
Proposition~\ref{proposition: 2-colors} shows that, for every $n$, there is a $2$-edge-colouring of $Q_n$ containing no monochromatic copy of $H(g)$. 
Hence both colour classes are $H(g)$-free and one of them contains at least half of the edges of $Q_n$. \end{proof}

\section{Concluding Remarks}

We proved that, for every integer $g \geq 3$, there is a layered graph $H(g)$ of girth at least $g$ such that, for every integer $n$, there is a $2$-edge-colouring of $Q_n$ containing no monochromatic copy of $H(g)$. 
In particular, $\pi_\square(H(g)) \geq 1/2$. We did not attempt to optimise the parameters of the graph $H(g)$ such as $d$.

In general, there is a relation between Ramsey and Tur\'an properties of cubical graphs. 
If a graph $H$ has no Ramsey property in $Q_n$, that is, for every $n$, there is a colouring of $E(Q_n)$ in some fixed number $s$ of colours without monochromatic copies of $H$, then $H$ has positive Tur\'an density in the hypercube, namely density at least $1/s$.
Alon,  Radoi\v{c}i\'c,   Sudakov, and    Vondr\'ak, \cite{ARSV} gave a characterisation of all cubical graphs having Ramsey property in the hypercube.

\begin{theorem}[\cite{ARSV}]
A graph $H$ has Ramsey property in the hypercube if and only if it has a {\it nice} embedding in a layer of a hypercube such that all edges with the same direction have the same prefix sum.
\end{theorem}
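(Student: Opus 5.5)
This is the characterisation of Alon, Radoi\v{c}i\'c, Sudakov, and Vondr\'ak, and I will prove the two directions separately. Throughout, a \emph{nice} embedding is an embedding of $H$ into an edge layer of $Q_n$. For an edge $\{X, X\cup\{i\}\}$ the \emph{prefix sum} is $|X\cap[i-1]|$, taken modulo the appropriate number, or as the full vector of prefix counts, depending on the precise definition used in \cite{ARSV}.

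\textbf{Necessity.} I generalise the colouring of Lemma~\ref{lemma: local_actual_pole}. Fix $k$ and colour each edge $\{X,X\cup\{i\}\}$ by the pair $(|X| \bmod k, |X\cap[i-1]| \bmod k)$. This uses $k^2$ colours, a number independent of $n$.

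Suppose $H$ has a monochromatic copy. The first coordinate is constant on edges, and consecutive edge layers differ in $|X|$ by $1$. With $k$ larger than $|H|$ and $H$ connected, this forces the copy into a single edge layer; a disconnected $H$ is handled componentwise by a Cartesian product of colourings. So the copy is a layered embedding, and on it all edges have the same prefix sum modulo $k$.

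To obtain the stated condition I would compare two edges with the same direction $i$ whose lower ends are $X$ and $Y$. Their prefix sums differ by $|X\cap[i-1]|-|Y\cap[i-1]|$, and this quantity is bounded by the size of $H$. Taking $k$ larger than this bound, equality modulo $k$ upgrades to exact equality of prefix sums. Hence Ramsey implies that a nice embedding with equal prefix sums per direction exists.

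\textbf{Sufficiency.} Given such an embedding of $H$ in $Q_m$, I would show that every $s$-colouring of $Q_n$, $n$ large, contains a monochromatic copy. The plan is a Hales--Jewett or Graham--Rothschild style argument. Partition $[n]$ into $m$ large blocks and map each direction $j\in[m]$ to a choice of coordinates inside block $j$. The condition that all edges with the same direction have the same prefix sum means that the colour of an edge is determined by its direction together with combinatorial data on the earlier blocks. With enough blocks, one can therefore pass to a sub-cube in which the colour depends only on the prefix-sum pattern. Since that pattern is constant per direction, it remains to make the colours agree across directions. For this I would apply the Ramsey theorem for $k$-sets, or a product Ramsey theorem, to the index structure, choosing direction representatives so that the induced colouring is constant.

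\textbf{Main obstacle.} The hardest step is sufficiency. Specifically, it is defining the canonical sub-cube so that an edge's colour genuinely depends only on its prefix-sum data; this requires a careful iterated Ramsey or Hales--Jewett induction over the $m$ blocks. In a note such as this one, I would instead cite \cite{ARSV} for this direction.
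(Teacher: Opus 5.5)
\textbf{Scope and necessity.} The paper does not prove this statement. It quotes it from Alon, Radoi\v{c}i\'c, Sudakov and Vondr\'ak, and uses only the remark that a non-Ramsey $H$ admits a colouring with boundedly many colours.

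Your necessity direction is sound for connected $H$. Under the colouring $(|X| \bmod k, |X\cap[i-1]| \bmod k)$, a connected monochromatic copy lies in one edge layer. Even $k=2$ suffices for this, since two edges sharing a vertex have lower ends whose sizes differ by at most $1$. For two edges of direction $i$ with lower ends $X,Y$ in the copy, $\bigl||X\cap[i-1]|-|Y\cap[i-1]|\bigr|\le|X\triangle Y|<|V(H)|<k$, so the copy itself is nice.

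For disconnected $H$, a ``Cartesian product of colourings'' does not repair anything. Components of the monochromatic copy can sit in layers differing by a multiple of $k$, and can share a direction with prefix sums differing by a multiple of $k$. Instead, use that each component's copy is nice on its own. Then re-embed the components on disjoint blocks of coordinates, padding each by its own fixed set of fresh coordinates placed after all blocks, so that all components reach a common layer. This changes no prefix sum.

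\textbf{Sufficiency: the genuine gap.} The plan as written cannot work. Choosing representatives for the directions inside blocks is an order-preserving re-indexing of the ground set, and such a map preserves the prefix sum of every edge. So once the colour of an edge depends only on its prefix sum, no choice of representatives changes the colour $c_{p_i}$ received by direction $i$. If $c_{p_i}\neq c_{p_j}$, you are stuck.

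For example, $\{1\},\{1,3\},\{3\},\{2,3\},\{2\},\{2,4\},\{4\},\{1,4\}$ is a nice embedding of $C_8$ with $p_1=p_2=0$ and $p_3=p_4=1$. Yet every order-preserving image of it is two-coloured by $|X\cap[i-1]| \bmod 2$. Separately, your sentence that the hypothesis makes the colour ``determined by its direction together with data on earlier blocks'' has no content: the hypothesis concerns $H$, not the colouring.

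Two ideas are missing.

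First, a hypergraph Ramsey reduction. In the layer $(K,K+1)$, an edge $\{X,X\cup\{i\}\}$ is the $(K+1)$-set $S=X\cup\{i\}$ with its $(p+1)$-st smallest element marked, where $p=|X\cap[i-1]|$. Colour $S$ by the vector of colours of its $K+1$ marked versions, and apply Ramsey's theorem for $(K+1)$-uniform hypergraphs with $s^{K+1}$ colours. This yields a large $T\subseteq[n]$ on which each edge has a colour $c_p$ depending only on its prefix sum.

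Second, padding. Suppose the nice embedding lies in layer $(\ell,\ell+1)$ of $Q_m$. Replace every vertex $Z$ by $Z\cup P$, where $P$ is a set of $K-\ell$ new coordinates interleaved with $[m]$. This is again an embedding, now in layer $(K,K+1)$. It raises the prefix sum of direction $i$ to $p_i+t_i$, where $t_i$ is the number of elements of $P$ placed before $i$; this can be any non-decreasing sequence in $[0,K-\ell]$. Because all edges of direction $i$ share $p_i$, one shift per direction suffices, and this is exactly where the hypothesis enters.

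For $K$ large, pigeonhole gives $v_1<\dots<v_m$ in $[\ell,K-\ell]$, all in one colour class of $p\mapsto c_p$, with consecutive gaps at least $\ell$. Then $t_i=v_i-p_i$ is non-negative and non-decreasing. Mapping the padded embedding order-preservingly into $T$ gives a monochromatic copy of $H$.
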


Here, the prefix sum of an edge $e = \{X, X \cup \{i\}\}$ is given by
$p(e) = |X \cap [i-1]|$. The proof of this result shows that if $H$ is not Ramsey, then there is, for every $n$, an edge-colouring of $Q_n$ using at most $2\lceil \operatorname{diam}(H)/2\rceil$ colours and containing no monochromatic copy of $H$. 
So, for such an $H$, $\pi_\square(H)\geq 1/(2\lceil \operatorname{diam}(H)/2 \rceil)$. 
The graph $H$ used in our construction has diameter 
$\Omega(\log |V(H)|)$. 
Indeed, the graph $G$ in our construction is $d$-regular, has $N$ vertices, for a fixed $d$ and large $N$. Every ball of radius $r$ in $G$ has at most $2rd^r$ vertices, implying that the diameter of $G$ and thus of $H=T_1(G)$ is $\Omega(\log |V(H)|)$.
Thus, even if one could verify that our graph $H$ has no nice embedding from \cite{ARSV}, the lower bound in terms of diameter is substantially weaker than the explicit bound of $1/2$ obtained here.

We remark that there are certainly different similar pseudorandom-type constructions for the base graph $G$ one can use.

{\bf Acknowledgements.}  Some parts of the proof were obtained with assistance of the AI model Chatgpt 5.6 Pro. All the arguments were verified and rewritten by the authors.

    }

\end{document}